\renewcommand{\star}{{}^\ast}
\DeclarePairedDelimiter{\set}{\{}{\}}
\DeclarePairedDelimiter{\abs}{\lvert}{\rvert}
\DeclareMathOperator{\dcl}{dcl}
\newcommand{\tp}{\textrm{tp}}
\newcommand{\tmid}{\mathrel{\tilde \mid}}
\newcommand{\from}{\colon}
\renewcommand{\phi}{\varphi}
\newcommand{\restr}{\upharpoonright}
\newcommand{\proves}{\mathrel{\vdash}}
\newcommand{\bla}[4]{{#1}_{#2}#3\ldots#3{#1}_{#4}}
\newcommand{\monster}{\mathfrak U}
\theoremstyle{definition}
\newtheorem{theorem}{Theorem}[section]
\newtheorem{lemma}[theorem]{Lemma}
\newtheorem{corollary}[theorem]{Corollary}
\newtheorem{proposition}[theorem]{Proposition}
\newtheorem{defin}[theorem]{Definition}
\newtheorem{eg}[theorem]{Example}
\newtheorem{problem}[theorem]{Problem}
\newtheorem{rem}[theorem]{Remark}
\newtheorem{ass}[theorem]{Assumption}
\newtheorem*{mainthm}{Main Theorem}
\newtheorem*{mainco}{Main Corollary}
\g@addto@macro{\UrlBreaks}{\UrlOrds} 
\def\paragraph{\@startsection{paragraph}{4}%
  \z@\z@{-\fontdimen2\font}%
  {\normalfont\bfseries}}
\title{Extending orders to types}
\author{Lorenzo Luperi Baglini$^*$\,\orcidlink{0000-0002-0559-0770}}
\address{$^*$ Dipartimento di Matematica, Università  di Milano, Via Saldini 50, 20133 Milano, Italy}
\author{Marcello Mamino$^\dagger$\,\orcidlink{0000-0001-6103-9775}}
\address{$^\dagger$ Dipartimento di Matematica, Universit\`a di Pisa, Largo Bruno Pontecorvo 5, 56127 Pisa, Italy}
\author{Rosario Mennuni\,\orcidlink{0000-0003-2282-680X}}
\author{Mariaclara Ragosta$^\ddagger$\,\orcidlink{0009-0004-6641-4676}}
\address{$^\ddagger$ Department of Applied Mathematics (KAM), Charles University, Malostranské náměstí 25, Praha 1, Czech
Republic}
\author{Boris \v Sobot$^\dagger{}^\dagger$\,\orcidlink{0000-0002-4848-0678}}
\address{$^\dagger{}^\dagger$ Faculty of Sciences, University of Novi Sad, Trg Dositeja Obradovi\' ca 4, 21000 Novi Sad, Serbia}
\keywords{ultrafilters, partial order, definable completeness, nonstandard integers}
\subjclass[2020]{Primary: 54D80, 03C64. Secondary, 03H15}
\begin{document}

\begin{abstract}
  Given an ordered structure, we study a natural way to extend the order to preorders on type spaces. For definably complete, linearly ordered structures, we give a characterisation of the preorder on the space of 1-types. We apply these results to the divisibility preorder on the space of ultrafilters on the set of natural numbers, giving an independence result about the suborder consisting of ultrafilters with only one fixed prime divisor, as well as a classification of ultrafilters with finitely many prime divisors.
\end{abstract}

\maketitle

\section{Introduction}

Let $(M,\le,\ldots)$ be an expansion of an infinite partial order. In this note we study a construction extending $\le$ to preorders on type spaces, mainly focusing on the case where $M$ is a definably complete linear order.

More in detail, for $A$ a set of parameters in a partially ordered structure $(M,\le, \ldots)$, and $k$ a natural number, we equip $S_k(A)$ with a preorder,  denoted by $\precapprox$, that is defined as follows. For two $k$-types $p$ and $q$, we write $p\precapprox q$ iff, for some realisations $\alpha\models p$ and $\beta\models q$ in an elementary extension $\monster$, we have $\alpha\le \beta$ in the induced product (partial) order on $\monster^k$. 

Equipping type spaces with relations is not a new idea. In particular, the definition we will use is equivalent to one that has already been considered in the past for arbitrary relations, see for instance Definition~1.3 in~\cite{poliakovUltrafilterExtensionsFirstorder2021} and references therein. Another natural preorder, that we will not study here, may be defined by using tensor products,  see~\cite{savelievUltrafilterExtensionsLinearly2015}.  In a different direction, a certain relational structure on type spaces is key to Hrushovski's \emph{definability patterns}~\cite{hrushovskiDefinabilityPatternsTheir2020}.

The properties of the relation $\precapprox$, at the general level of partially ordered structures, are studied in \Cref{sec:posets}. In \Cref{sec:defcomplin} we study the posets $S_k(A)/\mathord\approx$, where $\approx$ is the equivalence relation induced by $\precapprox$, in the case of linearly ordered, definably complete structures (\Cref{def:defcmpllo}).  Definably complete dense linear orders were introduced in \cite{millerExpansionsDenseLinear2001}; under these assumptions, analogues of many theorems from real analysis hold~\cite{fornasieroDefinablyCompleteBaire2010,hieronymiAnalogueBaireCategory2013}. Examples of (not necessarily dense) definably complete linear orders include every expansion of $(\mathbb N,\le)$, or of $(\mathbb R, \le)$, and every o-minimal structure. Our main result is the following characterisation.
\begin{mainthm}[\Cref{thm:approx1char}]
  Let $(M,\le,\ldots)$ be a definably complete expansion of a linear order and, for $A\subseteq M$, let $\operatorname{CC}(A)$ be the set of cuts in the definable closure of $A$ that are filled in some elementary extension, with its natural linear order. Then ${(S_1(A)/\mathord\approx)}\cong \operatorname{CC}(A)$.
\end{mainthm}

The original motivation for this work comes from the study of divisibility and congruences on the space $\upbeta \mathbb N$ of ultrafilters on the set of natural numbers, see, e.g., \cite{sobotCONGRUENCEULTRAFILTERS2021,dinassoSelfdivisibleUltrafiltersCongruences2023a}. When the starting partial order is divisibility of natural numbers, and $\mathbb N$ is viewed as a structure with a symbol for every subset of each $\mathbb N^k$, the relations $\precapprox$, $\approx$ induced on $S_1(\mathbb N)\cong \upbeta \mathbb N$ are denoted by $\tmid$ and $=_\sim$ respectively. If $p$ is prime (\Cref{defin:primeuf}), we write $\mathcal E_p$ for the restriction of the poset $(\upbeta \mathbb N/\mathord =_\sim, \,\tmid\,)$ to equivalence classes of ultrafilters of the form $\tp(\gamma^\delta/\mathbb N)$, for $\gamma\models p$ and $\delta$ some nonstandard integer. In \Cref{sec:div} we see that the study of $\mathcal E_p$ naturally corresponds to studying $\precapprox$ on $S_1(\gamma)$ (\Cref{rem:tmidtoprecapprox}), and deduce the following.

\begin{mainco}[\Cref{thm:Epchar,thm:isocpindep}]
  For every prime ultrafilter $p$, if $\gamma\models p$ then the order $\mathcal E_p$ is isomorphic to $\operatorname{CC}(\gamma)$. It is independent of $\mathrm{ZFC}$ whether all $\mathcal E_p$ with $p$ nonprincipal are isomorphic.
\end{mainco}
If $q$ is the type of an increasing $k$-tuple of primes, one can similarly look at the suborder $\mathcal E_{q}$, whose study reduces to the study of $\precapprox$ on $S_k(\gamma)$. Although, for $k\ge 2$, a structure theorem for $\mathcal E_q$ is still lacking, we provide a classification of its points.

We conclude by leaving some open questions in \Cref{sec:croq}.

\medskip

\paragraph{Acknowledgements}

We thank Mauro Di Nasso for useful conversations and the anonymous referee for their thorough reading, their useful comments, and for catching a mistaken statement in a previous version of this paper. Much of this research was conducted in the coffee room of the Department of Mathematics of the University of Pisa; we thank the Department for its hospitality.

\medskip

\paragraph{Funding}

 LLB, RM and MR were supported by the project PRIN 2022 ``Logical methods in combinatorics'', 2022BXH4R5, Italian Ministry of University and Research (MUR). MM and RM were supported by the MUR project PRIN 2022: ``Models, sets and classifications'' Prot.~2022TECZJA. We acknowledge the MUR Excellence Department Project awarded to the Department of Mathematics, University of Pisa, CUP I57G22000700001.  RM is a member of the INdAM research group GNSAGA. MR is supported by project 25-15571S of the Czech Science Foundation (GAČR). B\v S was supported by the Science Fund of the Republic of Serbia (call IDEJE, project SMART, grant no.\ 7750027) and by the Ministry of Science, Technological Development and Innovation of the Republic of Serbia (grant no.\ 451-03-47/2023-01/200125).

\section{Types in ordered structures}\label{sec:posets}
Let $(M,\le,\ldots)$ be an infinite structure expanding a partial order. We also denote by $\le$ the induced product order on its Cartesian powers $M^k$; namely, $(\bla a1,k)\le (\bla b1,k)$ iff for every $i\le k$ we have $a_i\le b_i$.

We adopt standard model-theoretic conventions. E.g., we fix a sufficiently large cardinal $\kappa$, and work inside a $\kappa$-saturated and $\kappa$-strongly homogeneous monster model  $\monster$.  \emph{Small} means ``of cardinality less than $\kappa$''. Parameter sets, usually denoted by $A$,  are assumed to be small subsets of $\monster$, unless otherwise indicated. The relation of having the same type over $A$ is denoted by $\equiv_A$.  Realisations of types over small sets are assumed to be in $\monster$. We sometimes write $x$ for a tuple of variables $x=(\bla x1,k)$, and similarly for tuples of elements. \emph{Definable} means ``definable with parameters''; similarly, formulas are allowed to use parameters from sets that will be clear from context. By \emph{type} we will mean ``complete type over some parameter set'' but, as usual,   \emph{type-definable} means ``definable by a partial type''. If $p\in S_k(A)$ and $f$ is an $A$-definable function, by $f_*p$ we denote the pushforward of $p$ along $f$. We write $\dcl$ for the definable closure operator.

\subsection{Ordering types}

\begin{defin}\label{def:lessapprox}
Let $A\subseteq M$ and $k\in \mathbb N$. For $p,q\in S_k(A)$, we define $p\lessapprox q$ iff there are $\alpha\models p$ and $\beta\models q$ such that $\alpha\le \beta$. We write $p\approx q$ iff $p\lessapprox q\lessapprox p$.
\end{defin}
\begin{rem}\label{rem:freeforall}
 By an automorphism argument, $p\lessapprox q$ if and only if for every $\alpha\models p$ there is $\beta\models q$ such that $\alpha\le \beta$, if and only if for every $\beta\models q$  there is $\alpha\models p$ such that $\alpha\le \beta$.
\end{rem}
\begin{rem}
  Clearly, $S_k(A)/\mathord\approx$ is partially ordered by the relation induced by $\precapprox$, which we also denote by the same symbol. Whenever we mention $S_k(A)/\mathord\approx$, we consider it equipped with this partial order.
\end{rem}

 We call a formula $\phi(\bla x1,k)$ \emph{upward closed} [respectively, \emph{downward closed}] iff the set $\phi(M)\subseteq M^k$ it defines is upward closed [respectively, downward closed].

\begin{proposition}\label{pr:upwclosedchar}
For every $p,q\in S_k(A)$, the following are equivalent. 
\begin{enumerate}
\item \label{point:pleq} $p\lessapprox q$.
\item \label{point:upwardclosed}If $\phi(x)\in p(x)$ is upward closed, then $\phi(x)\in q(x)$.
\item \label{point:downwardclosed}If $\phi(x)\in q(x)$ is downward closed, then $\phi(x)\in p(x)$.
\end{enumerate}
All conditions above also imply the following, and are equivalent to it provided that $k=1$ or that $\dcl(A)$ contains two comparable points.
\begin{enumerate}[resume*]
\item \label{point:incrfun}For all $A$-definable, increasing partial functions $f\from M^k\to M$ such that both $p$ and $q$ contain the domain of $f$, we have  $f_* p\lessapprox f_* q$.
\end{enumerate}
\end{proposition}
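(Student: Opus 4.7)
The plan is to establish the cycle $(1) \Rightarrow (2) \Rightarrow (1)$ and note that $(2) \Leftrightarrow (3)$ is a boolean duality, then treat the last clause separately. The direction $(1) \Rightarrow (2)$ is immediate: given $\alpha \models p$ and $\beta \models q$ with $\alpha \le \beta$, every upward closed $\phi(x) \in p$ is true at $\alpha$, hence at $\beta$, so $\phi \in q$. The equivalence $(2) \Leftrightarrow (3)$ follows at once from the observation that $\phi$ is upward closed iff $\neg \phi$ is downward closed.

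The main substance is $(2) \Rightarrow (1)$, which I would prove by contradiction and compactness. Assume $p \not\lessapprox q$, so the partial type $p(x) \cup q(y) \cup \{x \le y\}$ is unsatisfiable in $\monster$; by compactness there are $\phi(x) \in p$ and $\psi(y) \in q$ such that $\phi(x) \wedge \psi(y) \wedge (x \le y)$ is inconsistent. Consider the $A$-formula $\chi(x) := \forall y(\psi(y) \to x \not\le y)$. A short check using transitivity of $\le$ shows that $\chi$ is upward closed in the product order on $M^k$, and $\phi \vdash \chi$, so $\chi \in p$. By $(2)$ we get $\chi \in q$; but then, for any $\beta \models q$, both $\psi(\beta)$ and $\chi(\beta)$ hold, and instantiating $\chi(\beta)$ at $y := \beta$ yields $\beta \not\le \beta$, contradicting reflexivity.

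For the last clause, $(1) \Rightarrow (4)$ is immediate, since an increasing $f$ sends $\alpha \le \beta$ to $f(\alpha) \le f(\beta)$ whenever both are defined. If $k = 1$, taking $f$ to be the identity yields $(4) \Rightarrow (1)$ at once. If instead $\dcl(A)$ contains $a < b$, I would reuse the obstruction from the compactness step: given $\phi \in p$ and $\psi \in q$ as there, the $A$-formula $D(x) := \exists y (\psi(y) \wedge x \le y)$ cuts out a downward closed set, so the function $f \from M^k \to M$ defined to take value $a$ on $D$ and $b$ on its complement is total, $A$-definable, and increasing (the only non-trivial monotonicity case, $x \le x'$ with $f(x) = b$ and $f(x') = a$, is ruled out by the downward closure of $D$). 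Since $\phi \vdash \neg D$, every $\alpha \models p$ satisfies $f(\alpha) = b$; since any $\beta \models q$ witnesses $D(\beta)$ via $y := \beta$, one gets $f(\beta) = a$. Hence $f_* p$ and $f_* q$ are the $A$-types of $b$ and $a$ respectively, and $f_* p \not\lessapprox f_* q$ contradicts $(4)$.

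The only step that requires a moment's care is the monotonicity verification for $\chi$ and for $f$; the rest is compactness, duality, and bookkeeping. The main conceptual ingredient is the upward closed formula $\chi$ extracted from any witness to the failure of $p \lessapprox q$, which already carries the full strength of the proposition.
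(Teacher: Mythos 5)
Your proof is correct and follows essentially the same route as the paper's: compactness extracts a pair $\phi\in p$, $\psi\in q$ witnessing the failure of $p\lessapprox q$, from which a monotone formula separating the types is built, and the last clause is handled by the identity (for $k=1$) or by a two-valued increasing ``characteristic'' function using the two comparable points of $\dcl(A)$. The only differences are cosmetic dualities: you use the complement of the downward closure of $\psi$ where the paper uses the upward closure of $\phi$, and you prove $(4)\Rightarrow(1)$ by contraposition where the paper proves $(4)\Rightarrow(2)$ directly.
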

\begin{proof}

     $\eqref{point:pleq}\Rightarrow \eqref{point:upwardclosed}$: immediate from the definitions.

  $\eqref{point:upwardclosed}\Rightarrow \eqref{point:pleq}$: suppose that $p\centernot \lessapprox q$. Then, by compactness, there are $\theta(x)\in p(x)$ and $\psi(y)\in q(y)$ such that $\theta(x)\land \psi(y)\proves x\centernot \le y$. Let $\phi(y)$ be the upward closure of $\theta(y)$, namely, $\phi(y)\coloneqq  \exists z\; (\theta(z)\land z\le y)$. Then, $\phi(y)$ is inconsistent with $\psi(y)$, hence cannot belong to $q(y)$. But $\theta(x)$ implies $\phi(x)$, hence the latter belongs to $p(x)$. 

$\eqref{point:upwardclosed}\Leftrightarrow\eqref{point:downwardclosed}$: follows from the observation that the complements of upward closed sets are precisely the downward closed sets.

$\eqref{point:pleq}\Rightarrow \eqref{point:incrfun}$: immediate from the definitions.

$\eqref{point:incrfun}\Rightarrow\eqref{point:pleq}$ in the case $k=1$: apply $\eqref{point:incrfun}$ to the identity function.

$\eqref{point:incrfun}\Rightarrow \eqref{point:upwardclosed}$ if $\abs {\dcl(A)}\ge 2$: fix two elements $0<1\in \dcl(A)$, and use them to write characteristic functions of definable sets, with value $1$ on points belonging to the set. For upward closed sets such characteristic functions are increasing.
\end{proof}
We will say that \emph{$p$ lies on an antichain} to mean that some definable set in $p$ is an antichain.
\begin{proposition}\label{pr:charac}
  Let $p\in S_k(A)$.  The following are equivalent.
  \begin{enumerate}
  \item \label{point:increl} There are no $\alpha\ne\alpha'\models p$ such that $\alpha\le \alpha'$.
  \item \label{point:antichain} $p$ lies on an antichain.
  \end{enumerate}
\end{proposition}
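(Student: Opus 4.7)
The plan is to prove the direction (\ref{point:antichain}) $\Rightarrow$ (\ref{point:increl}) directly from definitions and handle the converse by a standard compactness argument.

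For (\ref{point:antichain}) $\Rightarrow$ (\ref{point:increl}): Suppose $\phi(x) \in p$ defines an antichain, and let $\alpha, \alpha' \models p$ with $\alpha \le \alpha'$. Then both satisfy $\phi$, so both lie in the antichain $\phi(\monster)$, whence incomparability of distinct elements forces $\alpha = \alpha'$.

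For (\ref{point:increl}) $\Rightarrow$ (\ref{point:antichain}): I would argue by contrapositive. Assuming $p$ does not lie on any antichain, I consider the partial type
\[
\Sigma(x,y) \coloneqq p(x) \cup p(y) \cup \{x \le y,\ x \ne y\}.
\]
A realisation of $\Sigma$ in $\monster$ would directly contradict (\ref{point:increl}), so it suffices to show $\Sigma$ is consistent. By compactness, if $\Sigma$ were inconsistent, there would exist a single $\phi(x) \in p$ (using that $p$ is closed under conjunction) with $\phi(x) \wedge \phi(y) \wedge x \le y \wedge x \ne y$ inconsistent. By symmetry in $x, y$, this says that any two elements of $\phi(\monster)$ that are $\le$-comparable must be equal; equivalently, $\phi(\monster)$ is an antichain. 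This contradicts the assumption that $p$ does not lie on an antichain, so $\Sigma$ is consistent and we obtain $\alpha \ne \alpha' \models p$ with $\alpha \le \alpha'$.

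The argument is essentially routine once the right partial type is identified; the only mild subtlety is observing that the formula $\phi(x) \wedge \phi(y) \wedge x \le y \wedge x \ne y$ being inconsistent yields full incomparability (not merely the absence of a strict increase), which follows by swapping the roles of $x$ and $y$.
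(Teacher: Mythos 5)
Your proof is correct and takes essentially the same route as the paper's: both hinge on applying compactness to the partial type $p(x)\cup p(y)\cup\{x\le y,\ x\ne y\}$ and extracting, after taking conjunctions, a single formula of $p$ that defines an antichain (you phrase the nontrivial direction contrapositively, while the paper argues it directly). Your observation that one must swap $x$ and $y$ to upgrade ``no strict increase'' to full incomparability is a detail the paper leaves implicit, and is worth having spelled out.
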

\begin{proof}
  If $\eqref{point:increl}$ holds, by compactness and up to taking conjunctions there is $\phi(x)\in p(x)$ such that $\phi(x)\land \phi(y)\proves x=y\lor x\centernot\le y$. Hence $\phi(x)$ defines an antichain, proving $\eqref{point:increl}\Rightarrow \eqref{point:antichain}$, and $\eqref{point:antichain}\Rightarrow \eqref{point:increl}$ is obvious.
\end{proof}

\begin{proposition}\label{pr:singleton}
For every $p\in S_k(A)$, the following are equivalent.
  \begin{enumerate}
   \item \label{point:singleton} The $\approx$-class of $p$ is a singleton.
   \item \label{point:singletoneq} The locus of $p$ is convex, that is, if $\alpha,\alpha'\models p$ and $\alpha\le\beta\le\alpha'$ then $\beta\models p$.
  \end{enumerate}
\end{proposition}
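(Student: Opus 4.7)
\medskip

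The plan is to prove each implication directly from the definitions, with the only subtlety being the handling of boundary cases in one direction.

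For $\eqref{point:singleton}\Rightarrow\eqref{point:singletoneq}$, I would argue as follows. Assume the $\approx$-class of $p$ is a singleton, and suppose $\alpha,\alpha'\models p$ with $\alpha<\beta<\alpha'$. Let $q\coloneqq\tp(\beta/A)$. The inequality $\alpha\le\beta$ directly witnesses $p\lessapprox q$, and $\beta\le\alpha'$ witnesses $q\lessapprox p$, hence $p\approx q$. By assumption, $q=p$, so $\beta\models p$. This direction is essentially just unpacking definitions.

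For the converse $\eqref{point:singletoneq}\Rightarrow\eqref{point:singleton}$, assume the locus of $p$ is convex and let $q\in S_k(A)$ with $q\approx p$; I want to conclude $q=p$. From $p\lessapprox q$ I pick some $\alpha\models p$ and $\beta\models q$ with $\alpha\le\beta$. Since $q\lessapprox p$, Remark~\ref{rem:freeforall} furnishes, for this particular $\beta$, some $\alpha'\models p$ with $\beta\le\alpha'$. If either $\alpha=\beta$ or $\beta=\alpha'$, then $\beta$ already realises $p$. Otherwise $\alpha<\beta<\alpha'$ in the product order, and convexity of the locus gives $\beta\models p$. In every case $q=\tp(\beta/A)=p$, as required.

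I do not anticipate any serious obstacle here: the statement is a direct structural consequence of the definition of $\precapprox$ together with the automorphism-based reformulation in Remark~\ref{rem:freeforall}. The only point that warrants explicit mention in the write-up is the need to separate out the degenerate cases $\alpha=\beta$ and $\beta=\alpha'$ from the genuinely strict chain $\alpha<\beta<\alpha'$, since the convexity hypothesis is phrased using strict inequalities.
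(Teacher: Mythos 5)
Your proof is correct and is essentially the paper's argument: the paper's own proof just reads ``This is obvious from the definitions'', and your write-up --- building the chain $\alpha\le\beta\le\alpha'$ via \Cref{rem:freeforall} and separating the degenerate equality cases before invoking convexity --- is exactly the intended unpacking.
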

\begin{proof}
For $\eqref{point:singleton}\Rightarrow \eqref{point:singletoneq}$, if there are $\alpha,\alpha'\models p$ and $\beta$ is such that  $\alpha\le\beta\le\alpha'$, then $\tp(\beta/A)\approx p$ by definition, hence by \eqref{point:singleton} we must have $\beta\models p$. For $\eqref{point:singletoneq}\Rightarrow \eqref{point:singleton}$, assume that $p\approx q$. By \Cref{rem:freeforall} there are $\alpha,\alpha'\models p$ and $\beta\models q$ such that $\alpha\le \beta\le \alpha'$. It then follows from \eqref{point:singletoneq} that $p=q$.
\end{proof}

\begin{proposition}\label{pr:antichsing}
  If $p$ lies on an antichain, then its $\approx$-class is a singleton.
\end{proposition}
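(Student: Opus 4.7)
The statement follows quickly from the two preceding propositions, so the plan is to reduce to them. By \Cref{pr:charac}, saying that $p$ lies on an antichain is equivalent to saying that there are no distinct $\alpha,\alpha'\models p$ with $\alpha\le\alpha'$. In particular there cannot exist $\alpha,\alpha'\models p$ with $\alpha<\alpha'$, so the convexity condition of \Cref{pr:singleton} is vacuously satisfied, and that proposition then yields that the $\approx$-class of $p$ is a singleton.

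Alternatively, one can argue directly using \Cref{rem:freeforall}, without invoking \Cref{pr:singleton}. Suppose $q\approx p$ and pick any $\alpha\models p$. Applying the remark to $p\lessapprox q$ produces $\beta\models q$ with $\alpha\le\beta$; applying it to $q\lessapprox p$ produces $\alpha'\models p$ with $\beta\le\alpha'$. Since $p$ lies on an antichain, \Cref{pr:charac} forces $\alpha=\alpha'$, and then antisymmetry of $\le$ gives $\alpha=\beta$, so $q=\tp(\alpha/A)=p$.

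I do not anticipate any real obstacle here: the content of the statement is already packaged in \Cref{pr:charac,pr:singleton}, and the proof is essentially a one-line combination of them. If anything, the only thing to be mildly careful about is that the convexity clause in \Cref{pr:singleton} is phrased with a strict $\alpha<\beta<\alpha'$, so one should note explicitly that antichain-ness rules out the strict inequality $\alpha<\alpha'$ between realisations, not merely the non-strict one.
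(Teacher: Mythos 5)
Your first argument is precisely the paper's proof: point~\eqref{point:increl} of \Cref{pr:charac} makes the convexity condition in \Cref{pr:singleton} vacuous, so the class is a singleton. Your alternative direct argument via \Cref{rem:freeforall} is also correct, but the main route is essentially identical to the paper's.
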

\begin{proof}
Point \eqref{point:increl} of \Cref{pr:charac} clearly implies point \eqref{point:singletoneq} of \Cref{pr:singleton}.
\end{proof}

It can happen that the $\approx$-class of a type $p$ is a singleton even if $p$ contains no antichain.

\begin{eg}\label{ex:Q}
In $(\mathbb Q,<)$, or more generally in an arbitrary o-minimal structure, every $1$-type lies in a singleton $\approx$-class, but linearity of the order implies that there are no antichains with more than one point.
\end{eg}

\begin{rem}\label{rem:scfsloa}
  Later we will deal with a structure where, for every type $p$, and every pair $\alpha<\alpha'$ of realisations of $p$, there is $\beta\centernot\models p$ with $\alpha\le\beta\le\alpha'$. By \Cref{pr:singleton,,pr:antichsing,pr:charac}, this condition implies that the $\approx$-class of $p$ is a singleton if and only if $p$ lies on an antichain.
\end{rem}

It may happen that some interval $[\alpha,\alpha']$ consists entirely of realisations of a type $p$, but the locus of $p$ is not convex. This is visible in the following example.

\begin{eg}\label{ex:qwpwie}
 Consider the structure $M=(\mathbb Q,\le, X)$, where $X=\bigcup_{m\in \mathbb Z}\{a\in\mathbb{Q}:2m+\pi<a<2m+1+\pi\}$. An easy back-and-forth argument shows that the theory of this structure eliminates quantifiers after naming, for each $n\in \omega$, the  relation $R_n(x,y)$ that holds iff the set  $\set{k\in \mathbb Z : x<k + \pi < y}$ has size $n$.

  Note that $R_0$ is an equivalence relation, and each equivalence class is either a maximal convex subset of $X$ or a maximal convex subset of $\mathbb Q\setminus  X$. Moreover, $R_n(x,y)$ counts how many equivalence classes meet $[x,y]$. 

It follows from quantifier elimination that there are precisely two elements of $S_1(M)$ that are larger than $M$, one in $X$, call it $p$, and one in its complement, call it $q$. If $\alpha,\alpha'\models p$ and $\models R_0(\alpha,\alpha')$, then $[\alpha,\alpha']$ consists entirely of realisations of $p$. Yet, the locus of $p$ is not convex, since if $\alpha,\alpha'\models p$ and $\models R_2(\alpha,\alpha')$ then there is $\beta\models q$ with $\alpha<\beta<\alpha'$.
\end{eg}

     \begin{proposition}\label{pr:box}For $p\in S_k(A)$, the following are equivalent.
 \begin{enumerate}
   \item\label{point:box1} There are $\alpha,\alpha'\models p$ such that, for every $i\le k$, we have $\alpha_i<\alpha_i'$.
   \item\label{point:box2} Let $\alpha\models p$. For $i\le k$, there are infinite intervals $I_i=(\gamma_i, \delta_i)$, definable over $\monster$, containing $\alpha_i$, with both $(\gamma_i, \alpha_i)$ and $(\alpha_i, \delta_i)$ infinite, and such that, for all   $\beta\in \bla I1\times k$, we have $\tp(\beta/A)\approx p$.
   \end{enumerate}
    \end{proposition}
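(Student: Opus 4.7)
The plan is to prove both directions via chain constructions of realisations of $p$.

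The direction \eqref{point:box2}$\Rightarrow$\eqref{point:box1} is immediate: take the intervals $I_i$ from \eqref{point:box2} and pick $\beta$ in the box $\prod_{i\le k} I_i$ with $\beta_i > \alpha_i$ for every $i$, which exists because each $(\alpha_i, \delta_i)$ is nonempty. Since $\tp(\beta/A) \approx p$, a fortiori $\tp(\beta/A) \lessapprox p$, so by \Cref{rem:freeforall} there is $\alpha' \models p$ with $\beta \le \alpha'$; the chain $\alpha_i < \beta_i \le \alpha'_i$ then gives \eqref{point:box1}.

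For \eqref{point:box1}$\Rightarrow$\eqref{point:box2}, fix $\alpha \models p$. A key preliminary step is the subclaim: for every $\gamma \models p$ there exist $\gamma^\pm \models p$ with $\gamma^- < \gamma < \gamma^+$ componentwise. Indeed, \eqref{point:box1} supplies two realisations $\beta < \beta'$ of $p$, and homogeneity of $\monster$ furnishes an $A$-automorphism $\sigma$ sending $\beta$ to $\gamma$, whence $\gamma^+ := \sigma(\beta')$ works; the construction of $\gamma^-$ is symmetric. Iterating this subclaim on either side of $\alpha$ produces componentwise-strict chains of realisations of $p$ through $\alpha$ of arbitrary finite length, which is precisely the finite satisfiability of the partial type over $A \cup \{\alpha\}$ in variables $\{x_r\}_{r \in \mathbb Q \setminus \{0\}}$ asserting that each $x_r \models p$, that $x_r < x_s$ componentwise for $r < s$, and that $x_r < \alpha$ (resp.\ $x_r > \alpha$) componentwise for $r < 0$ (resp.\ $r > 0$). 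By saturation this partial type is realised, yielding a chain $\{\alpha^{(r)}\}_{r \in \mathbb Q}$ with $\alpha^{(0)} := \alpha$. Now pick any $r_- < 0 < r_+$ in $\mathbb Q$ and set $\gamma_i := \alpha^{(r_-)}_i$ and $\delta_i := \alpha^{(r_+)}_i$. Each half-interval $(\gamma_i, \alpha_i)$ and $(\alpha_i, \delta_i)$ is infinite because it contains the $i$-th coordinates of the $\alpha^{(r)}$'s for intermediate $r$, pairwise distinct by componentwise strictness. For any $\beta$ in the resulting box, the sandwich $\alpha^{(r_-)} \le \beta \le \alpha^{(r_+)}$ holds componentwise with both endpoints realising $p$, so $p \lessapprox \tp(\beta/A) \lessapprox p$.

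The main technical step is the construction of the dense $\mathbb Q$-indexed chain of realisations of $p$ passing through $\alpha$; once this is achieved via the subclaim plus saturation, both the infiniteness of the half-intervals and the sandwich verification of $\approx$-equivalence are mechanical.
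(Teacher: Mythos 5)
Your proposal is correct and follows essentially the same route as the paper: for \eqref{point:box1}$\Rightarrow$\eqref{point:box2} the paper also uses an automorphism argument plus compactness/saturation to build a two-sided componentwise-strict chain of realisations of $p$ through $\alpha$ (indexed by $\mathbb Z\cup\{-\infty,+\infty\}$ rather than your $\mathbb Q$, an immaterial difference) and takes the extreme elements as interval endpoints, with the sandwich argument giving $\approx$-equivalence; the converse direction is identical.
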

    \begin{proof}$\eqref{point:box1}\Rightarrow\eqref{point:box2}$: by an automorphism argument, given an arbitrary $\alpha\models p$, we can find $\alpha'$ as in $\eqref{point:box1}$. By this and compactness, we can then find a sequence $(\alpha^j : j\in \mathbb Z\cup \set{-\infty, +\infty})$ of realisations of $p$ with $\alpha^0=\alpha$ and such that if $j<\ell$ then $\alpha^j_i<\alpha^\ell_i$. It now suffices to set $I_i\coloneqq (\alpha^{-\infty}_i, \alpha^{+\infty}_i)$.

      $\eqref{point:box2}\Rightarrow\eqref{point:box1}$: fix $\alpha\models p$ and let the $I_i$ be given by the assumption. Let $\beta\in \bla I1\times k$ be such that, for every $i\le k$, we have  $\alpha_i < \beta_i$. As $\tp(\beta/A)\approx p$, there is $\alpha'\models p$ such that $\beta \le \alpha'$, and for all $i\le k$ we have $\alpha_i<\beta_i\le \alpha_i'$.
    \end{proof}

\subsection{Linear orders}
\begin{rem}
  If $(M,\le)$ is a linear order, then
  \begin{enumerate}[label=(\alph*)]
  \item $S_1(A)/\mathord\approx$ is linear, and
  \item if $\dcl(A)\ne \emptyset$, then $S_k(A)/\mathord\approx$ is linear if and only if $k=1$. In fact, if $k\ge 2$, $a\in\dcl(A)$, $b\le a\le c$, and $b\ne c$, then the types of $(b,b,\ldots, b,c)$ and $(c,c,\ldots, c, b)$ over $A$ are incomparable.
  \end{enumerate}
\end{rem}

\begin{proposition}\label{pr:antichar} Let $(M,<)$ be  a linear order, $D\subseteq M^k$ and, for $i\in \set{1,\ldots,k}$,
  \[  G_i\coloneqq \{(x_1,\ldots,x_{i-1},x_{i+1},\ldots,x_k,x_i):(x_1,x_2,\ldots,x_k)\in D\}.
  \]
 The following conditions are equivalent.
 \begin{enumerate}
   \item \label{point:decfun1} $D$ is an antichain.
   \item \label{point:decfun2} For every $i\in\{1,2,\ldots,k\}$, the set  $G_i$
 is the graph of a partial strictly decreasing function.
\item \label{point:decfun3} For some $i\in\{1,2,\ldots,k\}$, the set $G_i$ is the graph of a partial strictly decreasing function.
   \end{enumerate}
\end{proposition}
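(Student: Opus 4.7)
The plan is to prove the implications in the cycle $\eqref{point:decfun1}\Rightarrow\eqref{point:decfun2}\Rightarrow\eqref{point:decfun3}\Rightarrow\eqref{point:decfun1}$. The middle arrow is immediate (a statement for every $i$ implies the corresponding statement for some $i$), so the substantive content lies in the two outer arrows. The underlying idea is that the product order on $M^k$ decomposes, for each $i$, into the product order on the remaining $k-1$ coordinates together with the linear order on the $i$-th coordinate, and antichain-ness with respect to the former corresponds precisely to the graph of a strictly decreasing partial function on the latter.

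For $\eqref{point:decfun1}\Rightarrow\eqref{point:decfun2}$, I would fix $i$ and, writing $\mathbf{x}'$ for the $(k-1)$-tuple obtained by deleting the $i$-th entry of $\mathbf{x}$, verify two things. First, $G_i$ is a partial function: two tuples $\mathbf{x},\mathbf{y}\in D$ with $\mathbf{x}'=\mathbf{y}'$ but $x_i\ne y_i$ would, by linearity of $<$, be comparable in the product order, contradicting that $D$ is an antichain. Second, $G_i$ is strictly decreasing: if $\mathbf{x},\mathbf{y}\in D$ satisfy $\mathbf{x}'\le\mathbf{y}'$ with $\mathbf{x}'\ne\mathbf{y}'$, then $x_i\le y_i$ would give $\mathbf{x}\le\mathbf{y}$ with $\mathbf{x}\ne\mathbf{y}$, again contradicting the antichain hypothesis, so linearity forces $x_i>y_i$.

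For $\eqref{point:decfun3}\Rightarrow\eqref{point:decfun1}$, I would suppose $G_i$ is the graph of a strictly decreasing partial function and take $\mathbf{x},\mathbf{y}\in D$ with $\mathbf{x}\le\mathbf{y}$ and $\mathbf{x}\ne\mathbf{y}$, aiming for a contradiction. If $\mathbf{x}'=\mathbf{y}'$, then the partial function property gives $x_i=y_i$, whence $\mathbf{x}=\mathbf{y}$. Otherwise $\mathbf{x}'<\mathbf{y}'$ in the product order on $M^{k-1}$, and strict decrease yields $x_i>y_i$, contradicting $x_i\le y_i$, which follows from $\mathbf{x}\le\mathbf{y}$.

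There is no real obstacle: once the coordinate-deletion notation is in place, both arrows are short case analyses. The only spot where linearity of $(M,<)$ is genuinely used is in $\eqref{point:decfun1}\Rightarrow\eqref{point:decfun2}$, to rule out the possibility that $x_i$ and $y_i$ are incomparable; the reverse direction $\eqref{point:decfun3}\Rightarrow\eqref{point:decfun1}$ would hold for arbitrary partial orders.
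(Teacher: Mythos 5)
Your proof is correct and follows essentially the same route as the paper's: linearity plus the antichain property give both functionality and strict decrease of $G_i$, and the converse is a short case analysis (which the paper merely labels ``easy'' and you spell out, correctly). Your closing observation that only the forward direction uses linearity is accurate.
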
  
\begin{proof}
For $\eqref{point:decfun1}\Rightarrow\eqref{point:decfun2}$, fix a coordinate, without loss of generality $x_k$. If $(\bla a1,k)\in D$ and $a\ne a_k$, by linearity $a< a_k$ or $a>a_k$, hence $(\bla a1,{k-1},a)\notin D$ and $D$ can be regarded as the graph of a partial function of $\bla x1,{k-1}$, call it $f$. Let us check that $f$ is strictly decreasing. If $(\bla a1,{k-1})<(\bla b1,{k-1})$, as $D$ is an antichain, we must have $(\bla a1,{k-1},f(\bla a1,{k-1}))\centernot \le (\bla b1,{k-1}, f(\bla b1,{k-1}))$, and since the order on $M$ is linear it follows that $f(\bla a1,{k-1}) > f(\bla b1,{k-1})$. The implication $\eqref{point:decfun2}\Rightarrow\eqref{point:decfun3}$ is obvious, and the proof of $\eqref{point:decfun3}\Rightarrow\eqref{point:decfun1}$ is easy.
\end{proof}

\begin{corollary}\label{cor:liesanti}
If $p$ is a $k$-type in a linear order, the following conditions are equivalent.
 \begin{enumerate}
   \item\label{point:acgraphtype1} $p$ lies on an antichain.
   \item\label{point:acgraphtype2} For some/every $i\in\{1,2,\ldots,k\}$, the set
   \[\{(x_1,\ldots,x_{i-1},x_{i+1},\ldots,x_k,x_i):(x_1,x_2,\ldots,x_k)\models p\}\]
   is the graph of a partial strictly decreasing function.
   \end{enumerate}
 \end{corollary}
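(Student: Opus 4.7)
The plan is to deduce the corollary by composing Proposition~\ref{pr:charac} with Proposition~\ref{pr:antichar}, applied to the full set of realisations of $p$ in the monster.

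First, I would use Proposition~\ref{pr:charac} to rephrase condition~\eqref{point:acgraphtype1}: $p$ lies on an antichain if and only if there are no distinct $\alpha,\alpha'\models p$ with $\alpha\le\alpha'$, equivalently, the set $p(\monster)\subseteq\monster^k$ of all realisations of $p$ is itself an antichain under the induced product order. This identifies the type-level notion of ``lying on an antichain'' with the set-theoretic notion of being an antichain, at the cost of passing from a definable witness inside $M$ to the locus of $p$ inside $\monster$.

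Second, since $(\monster,\le)$ is a linear order (being an elementary extension of $(M,\le)$), Proposition~\ref{pr:antichar} applies verbatim with $\monster$ in place of $M$: its statement and proof use only linearity of the ambient order and make no reference to definability. Taking $D\coloneqq p(\monster)$ in that proposition then yields the equivalence between $p(\monster)$ being an antichain and the permuted set $G_i$ being the graph of a partial strictly decreasing function for some, equivalently every, $i\in\{1,\ldots,k\}$; this is precisely condition~\eqref{point:acgraphtype2}.

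Chaining the two equivalences gives the corollary. I do not expect any genuine obstacle: the only thing to check is that Proposition~\ref{pr:antichar} can be invoked with ambient set $\monster^k$ rather than $M^k$, which is immediate because the proof given there is purely order-theoretic. In effect, the corollary is nothing more than the type-level reading of Proposition~\ref{pr:antichar}, with Proposition~\ref{pr:charac} supplying the bridge between ``$p$ has a definable antichain as a member'' and ``the locus of $p$ is an antichain''.
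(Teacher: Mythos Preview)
Your proposal is correct and essentially identical to the paper's own proof: both pass to the locus $p(\monster)$, apply \Cref{pr:antichar} there, and then bridge back to ``$p$ lies on a definable antichain''. The only cosmetic difference is that the paper invokes compactness directly for this last step, whereas you cite \Cref{pr:charac}, which is exactly that compactness argument packaged as a proposition.
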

 \begin{proof}
   $\eqref{point:acgraphtype1}\Rightarrow \eqref{point:acgraphtype2}$: 
   if $p$ lies on an antichain, its realisations form an antichain and we apply \Cref{pr:antichar}.

   $\eqref{point:acgraphtype2}\Rightarrow \eqref{point:acgraphtype1}$: by \Cref{pr:antichar} the set of realisations of $p$ is an antichain, and we apply compactness.
\end{proof}

\section{Definably complete structures}\label{sec:defcomplin}
\begin{defin}\label{def:defcmpllo}
  A linearly ordered structure $M$ is \emph{definably complete} iff, for every nonempty definable subset $X$ of $M$, if $X$ has an upper bound then it has a supremum, and if $X$ has a lower bound then it has an infimum.
\end{defin}
\begin{rem}\label{rem:defcmpl}
  \begin{enumerate}[label=(\alph*),leftmargin=*]
  \item\label{point:dcmplth} Whether $M$ is definably complete only depends on its theory; that is, if $M\equiv N$, then $M$ is definably complete if and only if $N$ is.         
  \end{enumerate}
  \begin{enumerate}[label=(\alph*),resume*]
\item\label{point:infindcl} If $X\subseteq M$ is $A$-definable and $\inf(X)$ exists, then $\inf(X)\in \dcl(A)$.
  \end{enumerate}
\end{rem}

\begin{ass}
For the rest of the section, unless otherwise stated, we assume $(M,\le,\ldots)$ to be a linearly ordered,  definably complete structure.
\end{ass}

\begin{defin}
  \begin{enumerate}[label=(\alph*),leftmargin=*]
  \item A \emph{cut} in a linear order $(X,\le)$ is a pair $(L,R)$ of subsets of $X$ such that $L\cup R=X$ and for all $\ell\in L$ and $r\in R$ we have $\ell\le r$.
  \end{enumerate}
    \begin{enumerate}[label=(\alph*),resume*]
\item If $(X,\le)\subseteq (Y,\le)$ are linear orders and $(L,R)$ is a cut in $X$, we say that $b\in Y$ \emph{realises} $(L,R)$ iff $L<b<R$ or $\set{b}=L\cap R$.
  \end{enumerate}
\end{defin}
Observe that one of $L,R$ may be empty, and that $L$ and $R$ are either disjoint or intersect in a unique point. Cuts of the second kind are precisely those that are realised by exactly one point in every linear order in which $X$ embeds. Clearly, the set of realisations of a given cut is always a convex set.

\begin{defin}
For $p\in S_1(A)$, we define $L_p\coloneqq \set{a\in \dcl(A): p(x)\proves x\ge a}$ and  $R_p\coloneqq \set{a\in \dcl(A): p(x)\proves x\le a}$. The \emph{cut} of $p$ is the type-definable set $\set{x=a}$ if $L_p\cap R_p=\set a$, or the type-definable set $\set{x> a : a\in L_p}\cup \set{x< a: a\in R_p}$ if $L_p\cap R_p=\emptyset$.
\end{defin}

\begin{lemma}\label{lemma:bicofinalincut}
Let $p(x)\in S_1(A)$, and let $C\subseteq \monster$ be the set of realisations of its cut. Then, $p$ is realised coinitially and cofinally many times in $C$.
\end{lemma}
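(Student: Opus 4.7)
The plan is to split on the shape of the cut of $p$. If $L_p\cap R_p=\{a\}$, then $p(x)\proves x=a$, so $p$ has the unique realisation $a$ and $C=\{a\}$, making the conclusion immediate. Assume henceforth $L_p\cap R_p=\emptyset$, so that $C=\{x\in\monster:L_p<x<R_p\}$ with both inequalities strict.

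I would prove cofinal realisation and deduce coinitial realisation by a symmetric argument. Suppose, for contradiction, that some $\gamma\in C$ is an upper bound for the set of realisations of $p$, i.e.\ $\alpha\le\gamma$ for all $\alpha\models p$. Then $p(x)\cup\{x>\gamma\}$ is inconsistent, and by compactness there is $\phi(x)\in p$ with $\phi(x)\proves x\le\gamma$. The set $\phi(\monster)$ is $A$-definable and bounded above in $\monster$; since definable completeness is elementary (\Cref{rem:defcmpl}), the supremum $s\coloneqq\sup\phi(\monster)$ exists in $\monster$, and by \Cref{rem:defcmpl}(b) it lies in $\dcl(A)$. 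Every realisation of $p$ satisfies $\phi$, hence is $\le s$, so $p(x)\proves x\le s$; that is, $s\in R_p$. Then $\gamma\in C$ forces $\gamma<s$, contradicting $s=\sup\phi(\monster)\le\gamma$.

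The only point to watch is the strictness of $L_p<\gamma<R_p$ in the definition of $C$, which is precisely what clashes with the newly produced supremum (or, in the coinitial case, infimum) lying in $R_p$ (respectively, $L_p$). I do not anticipate any further obstacle: the argument is a routine interplay of compactness, definable completeness, and the fact that suprema and infima of $A$-definable bounded sets land in $\dcl(A)$.
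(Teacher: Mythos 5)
Your proof is correct and rests on essentially the same mechanism as the paper's: definable completeness places the supremum (the paper uses the infimum) of an $A$-definable set belonging to $p$ inside $\dcl(A)$, hence inside $R_p$ (respectively $L_p$), which then clashes with the strict inequalities defining $C$. The only cosmetic difference is that you argue by contradiction via the supremum where the paper directly verifies consistency of $p(x)\cup\set{x<\beta}$ via the infimum, which incidentally lets you avoid the paper's case split on whether $L_p$ is empty.
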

\begin{proof}
If $p$ is realised in $\dcl(A)$ then $C$ is a singleton and the conclusion is trivial, hence we may assume it is not. In particular, because in linearly ordered structures algebraic closure coincides with definable closure, $p$ must have infinitely many realisations.
  
  Observe that every realisation of $p$ in $\monster$ must be in $C$.  It suffices to show that, for every $\beta\in C$, there are realisations of $p$ to its left and to its right. We prove the first statement; the second one follows by considering the structure with the reverse order.

  It is enough to prove that $p(x)\cup \set{x<\beta}$ is consistent. By compactness, it suffices to show that if $\phi(x)$ is a formula in $p(x)$, then $\phi(x)\land (x<\beta)$ is consistent. Since $\phi(x)$ belongs to a type, it defines a nonempty set.

  If $L_p\ne \emptyset$, up to taking a conjunction with a formula in $p(x)$ bounded from below, by definable completeness we may assume that $\phi(\monster)$ has an infimum $a$, which belongs to $\dcl(A)$ by \Cref{rem:defcmpl}\ref{point:infindcl}. Because $\phi(x)\in p(x)$, we must have $a\in L_p$, for otherwise $p$ would be inconsistent. In particular,  $a<\beta$, hence in $\monster$, by definition of infimum, there is a point of $\phi(\monster)$ in the interval $[a,\beta)$.

 If instead $L_p=\emptyset$, then we are assuming that $p$ is the type of a point smaller than $\dcl(A)$ and what we have to show is that $\phi(\monster)$ has arbitrarily small points. If not, by definable completeness it has an infimum in $\dcl(A)$. But then $\phi(x)$ cannot be consistent with being smaller than $\dcl(A)$.
\end{proof}
\begin{eg}
  The assumption of definable completeness is necessary. For instance, if $M$ is the expansion of $(\mathbb Q,\le)$ by a predicate $P$ for $\set{x\in \mathbb Q : x< \pi}$, then the type $p\in S_1(M)$ defined as $p(x)=\set{P(x)}\cup \set{a<x : a\in P(\mathbb Q)}$ is not realised cofinally in its cut.
\end{eg}
\begin{defin}
The space $\operatorname{CC}(A)$ of \emph{consistent cuts} over $A$ is the space of quantifier-free $1$-types in the language $\set{\le}$ over $\dcl(A)$ (where $\dcl$ is computed in the original structure, not in the reduct to $\set\le$). We view it as a linear order in the natural way.
\end{defin}
\begin{rem}\label{rem:qftypesarelegalcuts}
  If $A$ is a small set, $\operatorname{CC}(A)$ can be identified with the set of cuts in $\dcl(A)$ which are realised by some element of $\monster$.  Equivalently, this is the space of ultrafilters on the Boolean algebra of subsets of $\monster$  generated by intervals with endpoints in $\dcl(A)$.
\end{rem}

\begin{theorem}\label{thm:approx1char}
The natural map $S_1(A)/\mathord\approx \to \operatorname{CC}(A)$, sending an $\approx$-class to its cut, is an isomorphism of linear orders.
\end{theorem}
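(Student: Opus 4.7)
The plan is to verify that the map $\Phi \from [p]\mapsto (L_p,R_p)$ is well-defined, bijective, and an order isomorphism. I would organise the argument into four short steps, three of which reduce almost immediately to \Cref{pr:upwclosedchar}, while the crux is the injectivity/strictness part, which will rest on \Cref{lemma:bicofinalincut}.

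First, the key observation for well-definedness and one half of order-preservation: for each $a\in \dcl(A)$, the formula $x\ge a$ is upward closed and $x\le a$ is downward closed. Applying \Cref{pr:upwclosedchar}, if $p\lessapprox q$ then every $a\in L_p$ lies in $L_q$, and every $a\in R_q$ lies in $R_p$. In particular $L_p\subseteq L_q$, so $\Phi$ is monotone, and $p\approx q$ forces $(L_p,R_p)=(L_q,R_q)$, so $\Phi$ is well-defined on $\approx$-classes.

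Surjectivity is then immediate: by \Cref{rem:qftypesarelegalcuts}, any element of $\operatorname{CC}(A)$ arises as the cut of some $\beta\in\monster$ in $\dcl(A)$, and $\tp(\beta/A)$ is sent to that cut by construction.

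The main content is the converse implication: if $(L_p,R_p)\le (L_q,R_q)$ in $\operatorname{CC}(A)$, then $p\lessapprox q$; combining this with the monotonicity above yields injectivity and full order-preservation at once. I would split into two cases. If the inequality is strict, pick $a\in L_q\setminus L_p$; then $a\in R_p$, so every $\alpha\models p$ satisfies $\alpha\le a$ while every $\beta\models q$ satisfies $a\le \beta$, and any such pair witnesses $p\lessapprox q$ directly. If the cuts coincide and are realised by a single point $a\in\dcl(A)$, then $p=q=\tp(a/A)$ and there is nothing to prove. Otherwise the common cut has an empty intersection $L\cap R$, and \Cref{lemma:bicofinalincut} tells us that both $p$ and $q$ are realised coinitially and cofinally inside their common locus $C$; so, fixing $\alpha\models p$, we can find $\beta\models q$ in $C$ with $\alpha<\beta$, again giving $p\lessapprox q$.

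The only delicate step is the last one: absent \Cref{lemma:bicofinalincut}, one could in principle imagine two distinct types sharing a cut whose realisations are segregated in such a way that no comparable pair exists. The lemma rules this out and is the technical heart of the argument; once it is in hand, the rest of the theorem is bookkeeping with \Cref{pr:upwclosedchar}.
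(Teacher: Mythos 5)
Your proof is correct and follows essentially the same route as the paper's: well-definedness and monotonicity from comparing cuts, injectivity (and order-reflection) from \Cref{lemma:bicofinalincut}, and surjectivity from \Cref{rem:qftypesarelegalcuts}; the detour through \Cref{pr:upwclosedchar} for monotonicity is a cosmetic variation. One small nitpick: when the cut of $p$ is strictly below that of $q$, the set $L_q\setminus L_p$ may be empty (this happens exactly when $p$ is realised at some $a\in\dcl(A)$ and $q$ is the cut just above $a$), in which case you should instead pick $a\in R_p\setminus R_q$ and argue symmetrically.
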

\begin{proof}

If the cut of $p$ lies below the cut of $q$, by definition there is $a\in \dcl(A)$ such that  $p(x)\proves x<a$ and $q(x)\proves x\ge a$, or such that $p(x)\proves x\le a$ and $q(x)\proves x> a$. Hence, every realisation of $p$ lies below every realisation of $q$, witnessing that $p\centernot \succapprox q$, and in particular $p\centernot \approx q$. It follows that the map $\pi\from S_1(A)/\mathord\approx \to \operatorname{CC}(A)$  sending the class of a $1$-type $p$ over $A$ to the cut of $p$ in $\dcl(A)$ is well-defined and increasing. Moreover, it is injective by \Cref{lemma:bicofinalincut}, and surjective by \Cref{rem:qftypesarelegalcuts}.
    \end{proof}
    \begin{eg}
      The analogue of \Cref{thm:approx1char} in higher dimension, with $S_k(A)$ in place of $S_1(A)$ and with  $\operatorname{CC}(A)$ replaced by the space of quantifier-free $k$-types in the language $\set{\le}$ over $\dcl(A)$, is false.  For instance, in the full theory on $\mathbb N$ (see beginning of \Cref{sec:div}), let $\gamma$ be an infinite point and work over the prime model $\mathbb N(\gamma)$ over $\gamma$. Let $\alpha$ be a realisation of the cut just after $\mathbb N$, that is, the cut $(\mathbb{N},\dcl(\{\gamma\})\setminus\mathbb{N})$. Then  $\tp((\alpha,\gamma-\alpha)/\mathbb N(\gamma))$ and  $\tp((\alpha,\gamma-\alpha-1)/\mathbb N(\gamma))$ have the same quantifier-free part in the language $\set{\le}$, but $(\alpha,\gamma-\alpha)$ lies on the antichain $\{(x,y): x+y=\gamma\}$, hence the $\approx$-class of its type is a singleton by \Cref{pr:antichsing}.
    \end{eg}

    By \Cref{pr:upwclosedchar}, if $\abs{\dcl(A)}\ge 2$  then one can characterise $\approx$-classes by using increasing $A$-definable partial functions $\monster^k\to \monster$. We leave it open (\Cref{prob:descska}) to give a more explicit description of the poset $S_k(A)/\mathord\approx$. 

    \section{Divisibility by finitely many primes}\label{sec:div}
    We now come to the original motivation of this work, namely, divisibility between ultrafilters. View  $\mathbb N$ as a structure in the richest possible language\footnote{Up to having the same definable sets.}, namely the one containing, for every $k$, a symbol for every relation or function on $\mathbb N^k$. Observe that ultrafilters in $\upbeta \mathbb N^k$ are naturally identified with $k$-types over $\mathbb N$.

    Write $\star \mathbb N$ (instead of $\monster$) for a monster model of this theory. By \Cref{rem:defcmpl}\ref{point:dcmplth} and the fact that $\mathbb N$ is (obviously) definably complete with respect to its natural order, so is $\star \mathbb N$. As the language at hand trivially has built-in Skolem functions, the definable closure of every set $A\subseteq \star \mathbb N$ is a model, the \emph{prime model} $\mathbb N(A)$ over $A$. For $\gamma$ a single element, write $\mathbb N(\gamma)$ instead of $\mathbb N(\set\gamma)$.  Every point of $\mathbb N(\gamma)$ is of the form $f(\gamma)$ for a suitable $f\from \mathbb N\to \mathbb N$.
    \begin{rem}\label{rem:dclultrapower}
      Up to isomorphism, $\mathbb N(\gamma)$ is the ultrapower of $\mathbb N$ along the ultrafilter corresponding to $\tp(\gamma/\mathbb N)$. In nonstandard-analytical parlance, we say that $\gamma$ is a \emph{generator} of this ultrafilter.
    \end{rem}
    \begin{rem}\label{rem:eqinfdist}
      Distinct tuples with the same type over an arbitrary parameter set must have infinite distance, since if $\gamma$ and $\delta$ differ on the coordinate $i$ and $\abs{\gamma_i-\delta_i}=n$ then the remainder class of $\gamma_i$ modulo $n+1$ differs from that of $\delta_i$.
    \end{rem}

       Henceforth, the symbols $\precapprox$ and $\approx$ will be used for the relations induced by the usual order $\le$ on $\mathbb N$. For the corresponding relations induced by divisibility we will, consistently with previous papers on the subject, use the notations $\tmid$ and $=_\sim$ respectively. 
        \begin{defin}\label{defin:primeuf}
          An ultrafilter is \emph{prime} iff it contains the set $\mathbb{P}$ of primes. The set of such ultrafilters is denoted by $\overline{\mathbb P}$.
        \end{defin}

    \begin{defin}
      Let $q\in S_k(\mathbb N)$ be the type of an increasing $k$-tuple of primes, that is, $q(\bla x1,k)\proves(\bla x1<k)\land\bigwedge_{i\le k} (x_i\in \mathbb P)$. We denote by $\mathcal E_{q}$ the poset of $=_\sim$-equivalence classes of ultrafilters of the form $\tp(\gamma_1^{\alpha_1}\cdot\ldots\cdot \gamma_k^{\alpha_k}/\mathbb N)$, for  $\gamma\models q$ and $\bla \alpha1,k$ some nonstandard integers.
    \end{defin}
    We emphasise that some $\gamma_i$ may generate the same prime ultrafilter, and that being twice divisible by $p$ is not the same as being divisible by $p^2$; for example, if $p=\tp(\gamma_1/\mathbb N)=\tp(\gamma_2/\mathbb N)$ but $\gamma_1\ne \gamma_2$, then $\tp(\gamma_1\cdot \gamma_2/\mathbb N)\ne\tp(\gamma_1^2/\mathbb N)$, the former is twice divisible by $p$, and the latter is divisible by $p^2$.

    \subsection{Prime powers}

 \cite[Question 6.1]{So8} asked if the orders $\mathcal E_{p}$ are isomorphic for all $p\in\overline{\mathbb{P}}\setminus\mathbb{P}$. We show in \Cref{thm:isocpindep} that the answer is independent of $\mathrm{ZFC}$, by applying to $M=\mathbb{N}(\gamma)$ the results obtained in the previous sections.

    \begin{rem}\label{rem:tmidtoprecapprox}
      In order to study $\mathcal E_p$, suppose $\alpha,\beta\in {}^*{\mathbb N}$ have a unique prime factor, and this unique prime factor has type $p$.    Since $p$ is prime, if $\alpha',\beta'$ witness that $\tp(\alpha/\mathbb N)\tmid\tp(\beta/\mathbb N)$, then there must be $\gamma\models p$ and $\eta_0\le \eta_1$ such that $\alpha'=\gamma^{\eta_0}, \beta'=\gamma^{\eta_1}$.  If furthermore $\tp(\beta/\mathbb N)\tmid\tp(\alpha/\mathbb N)$ then there must be some  $\eta_2\equiv_\gamma \eta_0$ (so, $\gamma^{\eta_2}\models\tp(\alpha/\mathbb N)$) such that $\eta_1\le \eta_2$. In other words, the map sending the $=_\sim$-class of $\tp(\gamma^{\eta}/\mathbb N)$ to the $\approx$-class of $\tp(\eta/\mathbb N(\gamma))$ is an isomorphism between $\mathcal E_p$ and $S_1(\gamma)/\mathord\approx$.
    \end{rem}
Below, whenever we mention $\operatorname{CC}$, it will always be in the sense of the order $\le$ on $\star \mathbb N$.
\begin{theorem}\label{thm:Epchar}
  For every $p\in \overline{\mathbb P}\subseteq \upbeta\mathbb N$, if $\gamma\models p$, then $\mathcal E_{p}\cong \operatorname{CC}(\gamma)$.
\end{theorem}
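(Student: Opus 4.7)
The plan is a direct concatenation of \Cref{rem:tmidtoprecapprox} with \Cref{thm:approx1char}. The former already records, as an aside in the preceding discussion, that for any $\gamma\models p$ the assignment sending the $=_\sim$-class of $\tp(\gamma^\eta/\mathbb N)$ to the $\approx$-class of $\tp(\eta/\gamma)$ induces an order isomorphism $\mathcal E_p \cong S_1(\gamma)/\mathord\approx$; the essential content is that, because $p$ is prime, the only way an element with a single prime factor of type $p$ can divide another such element is by having a smaller exponent when both are written as powers of a common generator of $p$. I would therefore begin the proof simply by invoking this isomorphism.

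Next, I would apply \Cref{thm:approx1char} to the ambient structure $\star \mathbb N$ with parameter set $A=\{\gamma\}$. The hypotheses are in place: the order on $\mathbb N$ is linear, and $\mathbb N$ is trivially definably complete (every nonempty subset bounded above has a maximum), hence so is $\star \mathbb N$ by \Cref{rem:defcmpl}, as already noted at the opening of \Cref{sec:div}. The theorem then yields an isomorphism of linear orders $S_1(\gamma)/\mathord\approx \cong \operatorname{CC}(\gamma)$, where the right-hand side is the space of cuts in $\dcl(\gamma)=\mathbb N(\gamma)$ realised in $\star \mathbb N$.

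Composing the two isomorphisms gives $\mathcal E_p\cong \operatorname{CC}(\gamma)$, as required. There is no substantive obstacle: the statement is essentially a corollary that packages the two preceding results, and the whole point of the machinery developed in \Cref{sec:posets,sec:defcomplin} was precisely to enable this kind of reduction from the divisibility preorder on ultrafilters with a single prime factor to the order-theoretic setting on the exponents. The only verification worth flagging is that both constituent bijections respect the preorders in the same direction, which is transparent from their respective statements.
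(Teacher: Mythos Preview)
Your proposal is correct and matches the paper's own proof exactly: the paper simply writes ``By \Cref{rem:tmidtoprecapprox,thm:approx1char}'', which is precisely the concatenation you describe. Your additional remarks about verifying the hypotheses (linearity, definable completeness via \Cref{rem:defcmpl}) and the compatibility of directions are accurate but already implicit in the setup of \Cref{sec:div}.
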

\begin{proof}
  By  \Cref{rem:tmidtoprecapprox,thm:approx1char}.
\end{proof}

\begin{eg}
  If  $p\in \mathbb P$ is a standard prime, then $\mathcal E_p\cong \omega+1$.
\end{eg}

  It follows immediately from \Cref{thm:Epchar} that relations in the Rudin--Keisler preorder\footnote{Recall that an ultrafilter $p$ is Rudin--Keisler-below an ultrafilter $p'$ iff there is a function $f$ such that $p=f_*p'$.} yield embeddings between different $\mathcal E_p$.
        \begin{corollary}
      \begin{enumerate}[label=(\alph*),leftmargin=*]
      \item If $p$ is Rudin--Keisler-below $p'$, then there is an embedding of $\mathcal E_p$ into $\mathcal E_{p'}$.
          \end{enumerate}
    \begin{enumerate}[label=(\alph*),resume*]
      \item If $p$ and $p'$ are Rudin--Keisler-equivalent, then $\mathcal E_p\cong \mathcal E_{p'}$. 
      \end{enumerate}
    \end{corollary}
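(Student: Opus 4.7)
The plan is to derive both parts directly from \Cref{thm:Epchar}, which identifies $\mathcal E_p$ with $\operatorname{CC}(\gamma)$ for any $\gamma\models p$, thereby reducing the problem to understanding how $\operatorname{CC}(\gamma)$ depends on the generator.

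For (a), assume $f_*p' = p$ for some $f\colon \mathbb N \to \mathbb N$. I would pick $\gamma'\models p'$ and set $\gamma\coloneqq f(\gamma')$; then $\gamma\models p$, and since $f$ lies in the (richest possible) language, $\gamma\in\dcl(\gamma')$, whence $\mathbb N(\gamma)\subseteq \mathbb N(\gamma')$. The embedding $\operatorname{CC}(\gamma)\hookrightarrow \operatorname{CC}(\gamma')$ is then built by choosing, for each cut $c\in\operatorname{CC}(\gamma)$, a realisation $\beta_c\in\star\mathbb N$, and sending $c$ to the cut induced on $\mathbb N(\gamma')$ by $\beta_c$ (which is automatically consistent, being realised by $\beta_c$). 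Strict monotonicity is nearly immediate: if $c_1<c_2$ in $\operatorname{CC}(\gamma)$, some $a\in\mathbb N(\gamma)\subseteq\mathbb N(\gamma')$ separates $\beta_{c_1}$ and $\beta_{c_2}$, and the very same $a$ separates the images in $\operatorname{CC}(\gamma')$; injectivity follows.

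For (b), I would invoke the classical theorem (essentially due to Rudin and Katětov) that Rudin--Keisler equivalence of ultrafilters on $\mathbb N$ is always witnessed by a permutation: there exists a bijection $\pi\colon \mathbb N\to\mathbb N$ with $\pi_*p = p'$. Choosing $\gamma\models p$ and setting $\gamma'\coloneqq \pi(\gamma)\models p'$, both $\pi$ and $\pi^{-1}$ are in the language, so $\gamma'\in\dcl(\gamma)$ and $\gamma\in\dcl(\gamma')$, yielding $\mathbb N(\gamma)=\mathbb N(\gamma')$. Hence $\operatorname{CC}(\gamma)=\operatorname{CC}(\gamma')$ as linear orders, and \Cref{thm:Epchar} closes the argument.

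The only real conceptual obstacle is in (b): a naive attempt to combine two applications of (a) cannot work, because Cantor--Bernstein fails for linear orders, so the permutation formulation of RK-equivalence cannot be sidestepped along this route. The bookkeeping in (a), by contrast, is essentially absolute: any choice of realisation $\beta_c$ does the job, because whether one cut is below another is already decided by the parameters in $\mathbb N(\gamma)$, which sit inside $\mathbb N(\gamma')$.
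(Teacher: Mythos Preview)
Your proof is correct and follows essentially the same route as the paper's. For (a) you spell out the embedding $\operatorname{CC}(\gamma)\hookrightarrow\operatorname{CC}(\gamma')$ via chosen realisations, whereas the paper records only the key input $\mathbb N(\gamma)\subseteq\mathbb N(\gamma')$ and leaves the rest implicit; for (b) both arguments invoke the permutation form of Rudin--Keisler equivalence to obtain $\mathbb N(\gamma)=\mathbb N(\gamma')$, and your observation that Cantor--Bernstein for linear orders fails (so two applications of (a) do not suffice) is a welcome clarification absent from the paper.
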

    \begin{proof}
      \begin{enumerate}[label=(\alph*),leftmargin=*]
      \item Follows from the fact that if $p$ is Rudin--Keisler below $p'$ then $p$ is realised in the prime model over any realisation of $p'$.
          \end{enumerate}
    \begin{enumerate}[label=(\alph*),resume*]
      \item Let $f:\mathbb{N}\rightarrow\mathbb{N}$ be a bijection such that $f_*p={p'}$, which exists by~\cite[Theorem~8.17]{hindmanAlgebraStoneCechCompactification2011}. For every $\gamma\models p$, if $\delta=f(\gamma)$ then $\delta\models p'$. We have $\{g(\delta): g:\mathbb{N}\rightarrow\mathbb{N}\}=\{g(f(\gamma)): g:\mathbb{N}\rightarrow\mathbb{N}\}=\{h(\gamma): h:\mathbb{N}\rightarrow\mathbb{N}\}$, or in other words $\mathbb N(\gamma)=\mathbb N(\delta)$, and the conclusion follows.\qedhere
      \end{enumerate}
    \end{proof}

    \begin{lemma}\label{lemma:realsucc}
      If $A\subseteq \star\mathbb N$, then an element of $\operatorname{CC}(A)$ is realised in $\dcl(A)$ if and only if it has an immediate successor in $\operatorname{CC}(A)$.
    \end{lemma}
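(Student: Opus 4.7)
The plan is to prove both directions directly, exploiting the fact that $\star\mathbb N$ is discretely ordered and that $\dcl(A)$, being a model of the full theory of $\mathbb N$, is closed under the successor and predecessor functions.

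For the forward direction, suppose $(L,R)\in\operatorname{CC}(A)$ is realised by some $a\in\dcl(A)$, so $L\cap R=\{a\}$. The candidate immediate successor is the cut $(L',R')$ realised by $a+1\in\dcl(A)\subseteq\monster$, whose $L$-part is $L\cup\{a+1\}$. I would show that any cut $(L'',R'')\in\operatorname{CC}(A)$ with $(L,R)<(L'',R'')\le(L',R')$ must be realised by some $\beta\in\monster$ with $a<\beta\le a+1$; the only possibilities are $\beta=a+1$, giving $(L'',R'')=(L',R')$, or $a<\beta<a+1$, which is impossible by discreteness of $\star\mathbb N$. Hence $(L',R')$ is genuinely the immediate successor.

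For the reverse direction I would argue the contrapositive: assume $L\cap R=\emptyset$ and show no cut in $\operatorname{CC}(A)$ can be an immediate successor. Fix any realiser $\beta\in\monster$ of $(L,R)$; necessarily $\beta\notin\dcl(A)$, for otherwise $\beta\in L\cap R$. Take an arbitrary cut $(L',R')\in\operatorname{CC}(A)$ with $(L,R)<(L',R')$ and pick $a\in L'\setminus L$. Then $a\in\dcl(A)$ and $a>\beta$. Discreteness of $\star\mathbb N$ forces $\beta\le a-1$, and since $a-1\in\dcl(A)$ while $\beta\notin\dcl(A)$, this strengthens to $\beta<a-1$. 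Consequently, the cut $(L_{a-1},R_{a-1})$ realised by $a-1\in\dcl(A)\subseteq\monster$ lies in $\operatorname{CC}(A)$, sits strictly above $(L,R)$ (since $a-1\in L_{a-1}\setminus L$) and strictly below $(L',R')$ (since $a\in L'\setminus L_{a-1}$). This rules out $(L',R')$ being an immediate successor.

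The main obstacle I expect is a bookkeeping one: being careful about the order on $\operatorname{CC}(A)$ when cuts may or may not be realised in $\dcl(A)$, and checking that the ``unrealised'' cuts lying between $\dcl(A)$-consecutive points of $\star\mathbb N$ (which would ruin the argument) are not in $\operatorname{CC}(A)$ because a realiser would need to be a $\star\mathbb N$-element strictly between $a$ and $a+1$. Once this discreteness observation is isolated, the rest is direct manipulation of the sets $L$ and $R$.
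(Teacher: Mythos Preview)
Your proposal is correct and follows essentially the same approach as the paper: both arguments rest on discreteness of $\star\mathbb N$ and on $\dcl(A)=\mathbb N(A)$ being a model closed under successor and predecessor. The paper compresses the two directions into the chain of equivalences ``$(L,R)$ is realised in $\mathbb N(A)$ $\Leftrightarrow$ $R$ has a minimum $\Leftrightarrow$ $(L,R)$ has an immediate successor (namely the cut of $a+1$)'', whereas you prove the forward direction and the contrapositive of the backward direction separately with explicit constructions; this is a presentational difference rather than a substantive one.
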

    \begin{proof}
Let $(L,R)$ be a cut. Because $\dcl(A)=\mathbb N(A)$ is a model, its order type is that of $\mathbb N$ followed by copies of $\mathbb Z$. In particular, in $\mathbb N(A)$, the only point with no immediate predecessor is $0$, and moreover it is not consistent to add points between $a$ and $a+1$. From this, it is easy to see that $(L,R)$ is realised in $\mathbb N(A)$ if and only if $R$ has a minimum $a$, if and only if $(L,R)$ has an immediate successor, namely the cut realised by $a+1$.
\end{proof}

As usual, $\mathfrak c$ denotes the cardinality of the continuum. For $p\in \beta \mathbb P$, we denote by $\mathbb N^{\mathbb P}/p$ the ultrapower of $\mathbb N$ along $p$.
    
    \begin{theorem}\label{thm:isocpindep}
      \begin{enumerate}[label=(\alph*),leftmargin=*]
      \item  If the continuum hypothesis holds, then all the $\mathcal E_p$ with $p$ nonprincipal are isomorphic.
  \end{enumerate}
    \begin{enumerate}[label=(\alph*),resume*]        
      \item $\mathrm{ZFC}+\neg \mathrm{CH}$ does not prove that all $\mathcal E_p$ with $p$ nonprincipal are isomorphic. More precisely, let $M$ be a model of $\mathrm{ZFC}+\neg \mathrm{CH}$. In every forcing extension of $M$ obtained by adding $\kappa\geq{\mathfrak c}^M$-many Cohen reals, there are $p,q\in\overline{\mathbb{P}}\setminus\mathbb{P}$ such that  $\mathcal E_p\centernot\cong \mathcal E_q$. 
      \end{enumerate}
\end{theorem}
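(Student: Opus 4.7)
By \Cref{thm:Epchar} and \Cref{rem:dclultrapower}, for $p$ nonprincipal and $\gamma\models p$ one has $\mathcal E_p\cong\operatorname{CC}(\gamma)$, with $\mathbb N(\gamma)$ the ultrapower $\mathbb N^{\mathbb N}/p$ of $\mathbb N$ in the full language; moreover $\operatorname{CC}(\gamma)$ depends only on the linear-order reduct of $\mathbb N(\gamma)$. The theorem therefore reduces to whether ultrapowers of $\mathbb N$ by distinct nonprincipal ultrafilters are isomorphic as linear orders. Passing between arbitrary and prime ultrafilters is harmless by the Rudin--Keisler invariance of $\mathcal E$ noted above: composing with any bijection $\mathbb N\to\mathbb P$ carries any nonprincipal ultrafilter to a prime one with the same $\mathcal E$-type.

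For part~(a), I would invoke two standard facts: every countably incomplete ultrapower is $\aleph_1$-saturated, and $|\mathbb N^{\mathbb N}/p|=\mathfrak c$ for each nonprincipal $p$. Under $\mathrm{CH}$ one then has $\mathbb N(\gamma)$ saturated of cardinality $\aleph_1$; uniqueness of saturated models of a given cardinality gives $\mathbb N(\gamma)\cong\mathbb N(\delta)$ as full structures, and in particular as linear orders, for any nonprincipal $p,q$, whence $\operatorname{CC}(\gamma)\cong\operatorname{CC}(\delta)$ and $\mathcal E_p\cong\mathcal E_q$.

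For part~(b), the invariant I propose is the cofinality of the linear order $\mathbb N^{\mathbb N}/p$, which equals $\mathfrak d_p\coloneqq\min\{|F|:F\subseteq\mathbb N^{\mathbb N}$ is $p$-dominating$\}$ and always lies in the interval $[\mathfrak b,\mathfrak d]$. In $M[G]$, a standard nice-name argument shows that the first $\omega_1$ Cohen reals form an unbounded family (any function in $M[G]$ uses only countably many Cohen coordinates and so cannot dominate Cohen reals added on other coordinates), whence $\mathfrak b^{M[G]}\le\aleph_1$; combined with $\mathfrak d^{M[G]}=\mathfrak c^{M[G]}\ge\kappa\ge\mathfrak c^M>\aleph_1$, this yields $\mathfrak b<\mathfrak d$ strictly in $M[G]$. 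One then produces, in $M[G]$, an ultrafilter $p$ realising $\mathfrak d_p=\mathfrak b$, by extending a filter along which a prescribed unbounded family of size $\mathfrak b$ is dominating mod $p$, and a second ultrafilter $q$ realising $\mathfrak d_q=\mathfrak d$, by a transfinite construction using a $\mathfrak d$-sized dominating family. The ultrapowers $\mathbb N^{\mathbb N}/p$ and $\mathbb N^{\mathbb N}/q$ then have different cofinalities, so are non-isomorphic as linear orders; their cut spaces, and hence the $\mathcal E$-posets, differ.

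The main obstacle is the existence, in $M[G]$, of a ``small-cofinality'' ultrafilter with $\mathfrak d_p=\mathfrak b$: these can be constructed under $\mathfrak b<\mathfrak d$ by a delicate transfinite induction of length $\mathfrak c$ that simultaneously extends a filter and arranges that every function in $\mathbb N^{\mathbb N}$ be dominated mod the limit ultrafilter by some element of a fixed $\mathfrak b$-sized unbounded family. By contrast the uniqueness argument in part~(a) is essentially free, and the complementary construction of $q$ in~(b) is a routine bookkeeping argument.
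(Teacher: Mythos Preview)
Your part~(a) is essentially the paper's argument. For part~(b) you also hit on the same invariant as the paper, namely the cofinality of the ultrapower $\mathbb N^{\mathbb N}/p$. One minor omission: you should say why this invariant is visible in $\operatorname{CC}(\gamma)$, since in general distinct linear orders can have isomorphic cut spaces. The paper does this via \Cref{lemma:realsucc}: the elements of $\operatorname{CC}(\gamma)$ with an immediate successor are exactly the realised cuts, so the suborder on those points recovers $\mathbb N(\gamma)$ as a linear order, and an isomorphism $\mathcal E_p\cong\mathcal E_q$ forces the ultrapowers to be order-isomorphic. (Alternatively, one can observe that the cofinality of $\operatorname{CC}(\gamma)$ minus its top element equals that of $\mathbb N(\gamma)$.)

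The substantive gap is in your construction of the ultrafilters. You assert that under $\mathfrak b<\mathfrak d$ a transfinite induction of length $\mathfrak c$ produces $p$ with $\mathfrak d_p=\mathfrak b$, by forcing a fixed unbounded family $B$ of size $\mathfrak b$ to become $p$-dominating. But the induction does not go through as described: at a typical stage, with a filter $F$ in hand and a function $g$ to dominate, you need some $f\in B$ with $\{n:g(n)\le f(n)\}$ positive for $F$, and nothing you have arranged guarantees this---the restriction of $B$ to a set in $F$ may be bounded even when $B$ itself is not. The bare hypothesis $\mathfrak b<\mathfrak d$ is not known to suffice for this construction. The paper instead simply cites Roitman's theorem (independently Canjar): in this particular Cohen extension, for every regular uncountable $\lambda\le\kappa$ there is an ultrafilter whose ultrapower has cofinality exactly $\lambda$; taking $\lambda=\aleph_1$ and $\lambda=\aleph_2$ finishes. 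That result genuinely exploits the structure of Cohen forcing, not merely the cardinal arithmetic you computed, and your sketch does not capture this.
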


\begin{proof}
  \begin{enumerate}[label=(\alph*),leftmargin=*]
  \item  By \Cref{thm:Epchar} it suffices to show that all $\operatorname{CC}(\gamma)$ with $\gamma\notin \mathbb N$ are isomorphic. The reduct of $\mathbb N(\gamma)$ to $\set{\le}$ is a discrete order with a minimum and no maximum, and eliminates quantifiers after naming $0$ and the successor and predecessor functions. From this it is easy to see that $\operatorname{CC}(\gamma)\cong S_1(\mathbb N(\gamma)\restr \set{\le})$. Now, the ultrapower of a countable discrete order with a minimum and no maximum over every nonprincipal ultrafilter on $\mathbb N$ is $\aleph_1$-saturated of size $\le \mathfrak c$. By standard model theoretic facts about saturated models, under $\mathrm{CH}$ there is a unique such up to isomorphism.
  \item     By \cite[Theorem, p.~95]{R} in such a forcing extension, for every regular uncountable $\lambda\leq\kappa$, there is an ultrafilter $p_\lambda$ on $\mathbb{N}$ such that $\mathbb{N}^{\mathbb{N}}/p_\lambda$ has cofinality $\lambda$.\footnote{Independently, a similar result was proven by Canjar, see \cite[Section~3.4]{C}.} By fixing a bijection between $\mathbb N$ and $\mathbb P$ we may assume that  ${p}_\lambda$ is an ultrafilter on $\mathbb{P}$. We show that $\mathcal E_{p_{\aleph_1}}\centernot\cong\mathcal E_{p_{\aleph_2}}$. Assume not. Let $I_i$ be the suborder of $\mathcal E_{p_{\aleph_i}}$ consisting of points that have an immediate successor. As we are assuming $\mathcal E_{p_{\aleph_1}}\cong \mathcal E_{p_{\aleph_2}}$, we have $I_1\cong I_2$. Let $\gamma_i\models \mathcal E_{p_{\aleph_i}}$. By \Cref{thm:Epchar}, $\mathcal E_{p_{\aleph_i}}\cong \operatorname{CC}(\gamma_i)$, and by  \Cref{lemma:realsucc} the subset of the latter consisting of points with an immediate successor is isomorphic to $\dcl(\gamma_i)=\mathbb N(\gamma_i)$, that is, by \Cref{rem:dclultrapower}, to $\mathbb{N}^{\mathbb{P}}/{p}_{\aleph_i}$. Hence, from $I_1\cong I_2$ we obtain $\mathbb{N}^{\mathbb{P}}/{p}_{\aleph_1}\cong \mathbb{N}^{\mathbb{P}}/{p}_{\aleph_2}$, contradicting that they have different cofinalities.\qedhere
  \end{enumerate}
\end{proof}

\subsection{Finitely many prime divisors}

Let us now look at the case $k\ge 2$.
\begin{rem}\label{rem:tmidtoprecapprox2}
Let $q$ be the type of an increasing $k$-tuple of primes, and  suppose that the classes of $p,p'$ are in $\mathcal E_{q}$. If $p,p'$ are comparable, there must be $\bla \gamma1,k, \bla \delta1,k, \bla \epsilon1,k$ such that $(\bla \gamma1,k)\models q$, $\alpha\coloneqq \gamma_1^{\delta_1}\cdot\ldots\cdot \gamma_k^{\delta_k}\models p$, $\beta\coloneqq \gamma_1^{\epsilon_1}\cdot\ldots\cdot \gamma_k^{\epsilon_k}\models p'$, and $\alpha\mid \beta$. It follows that, in order for $p,p'$ to be comparable, it is necessary for their squarefree parts to be equal, analogously to what we say in \Cref{rem:tmidtoprecapprox}.  By coding $\set{\bla \gamma1,k}$ with $\gamma\coloneqq \bla\gamma1\cdot k$ (or by fixing a bijection between $\mathbb N$ and $\mathbb P^{[k]}$), we therefore see that $\mathcal E_q$ is isomorphic to $(S_k(\gamma), \precapprox)/\approx$, hence we reduce to studying the latter.
\end{rem}

\begin{rem}
  Analogous considerations show that if $q$ and $q'$ are types of increasing finite tuples of primes, of possibly different length, then we have two mutually exclusive cases:
  \begin{enumerate}[label=(\alph*)]
  \item $q$ is the pushforward of $q'$ along the projection on a subset of the coordinates, or vice versa; or
  \item every point of $\mathcal E_q$ is incomparable to every point of  $\mathcal E_{q'}$.
  \end{enumerate}
\end{rem}

We now give a classification of the points of $\mathcal E_{q}$.

\begin{rem}\label{rem:speccase}
  If $(\bla \alpha1,k)<(\alpha_1',\ldots, \alpha_k')$ are realisations of $p\in S_k(\gamma)$ and $i$ is such that $\alpha_i<\alpha_i'$ then, by \Cref{rem:eqinfdist}, the point $(\alpha_1,\ldots, \alpha_{i-1}, \alpha_i+1, \alpha_{i+1},\ldots, \alpha_k)$ does not realise $p$ and lies between them. Therefore, by \Cref{rem:scfsloa} and \Cref{pr:antichsing} the $\approx$-class of $p$ is a singleton if and only if $p$ lies on an antichain.
\end{rem}

  Fix a $k$-type $q$ of an increasing sequence of primes. Let  $p$ be an ultrafilter whose $=_\sim$-class is in $\mathcal E_{q}$. Fix
  $\alpha\models p$ of the form $\alpha=\prod_{i\le k}\gamma_i^{\delta_i}$, where $(\bla \gamma1,k)\models q$, and let $\gamma\coloneqq  \bla \gamma 1\cdot k$. Then, we have five mutually exclusive cases.
  \begin{enumerate}[label=(\alph*)]
  \item\label{point:lowerdim} All $\delta_i$ lie in $\mathbb N(\gamma)$, so there are functions $f_i\from \mathbb N\to \mathbb N$ such that $\alpha=\prod_{i\le k}\gamma_i^{f_i(\gamma)}$. Clearly, such a $=_\sim$-class must be a singleton.
  \item \label{point:nrac} No $\delta_i$ lies in $\mathbb N(\gamma)$ and, for every $i\le k$, there is a strictly decreasing, definable partial function $g_i\from \mathbb N(\gamma)^{k-1}\to \mathbb N(\gamma)$ such that $\delta_i=g_i(\delta_1,\ldots, \delta_{i-1},\delta_{i+1},\ldots,\delta_k)$. By  \Cref{cor:liesanti} and  \Cref{rem:speccase}, the class of $p$ is a singleton.
  \item\label{point:fulldim} There is a product $\bla I1\times k$ of infinite intervals containing $(\bla \delta1,k)$ such that all $I_i\cap \set{x<\delta_i}$ and all $I_i\cap \set{x>\delta_i}$ are infinite and such that,  for every  $(\bla \epsilon1,k)\in \bla I1\times k$, the type of $\prod_{i\le k} \gamma_i^{\epsilon_i}$ is $=_{\sim}$-equivalent to $p$.
  \item\label{point:ladder} No $\delta_i$ lies in $\mathbb N(\gamma)$, the class of $p$ is not a singleton, and we are not in case~\ref{point:fulldim}. Then, by \Cref{pr:box}, whenever $\delta'\equiv_\gamma \delta$ is such that $\delta\ge \delta'$, there is $i\le k$ such that $\delta_i=\delta_i'$.
  \item\label{point:mixed} For some $\ell>0$ there are indices, say $i=\ell+1,\ell+2,\ldots,k$ (up to a permutation of the variables), such that $\delta_i\in\mathbb N(\gamma)$, and for $j\le \ell$ we have $\delta_j\notin \mathbb N(\gamma)$. Then the class of $p$ naturally corresponds to some class in $\mathcal E_{q'}$ of kind \ref{point:nrac}, \ref{point:fulldim}, or \ref{point:ladder}, where $q'$ is the projection of $q$ on the first $\ell$ coordinates.
  \end{enumerate}

\begin{eg}
 Examples of each of these cases may be obtained as follows.
 \begin{enumerate}[label=(\alph*)]
 \item Consider, e.g., $\gamma_1^{\gamma_1}\cdot\ldots\cdot \gamma_k^{\gamma_k}$.
 \item Take some pairwise distinct  $\delta_1,\ldots, \delta_{k-1}$ not in $\mathbb N(\gamma)$ and all in Archimedean classes smaller than that of $\gamma_1$,  and set $g_k(\bla x1,{k-1})= \gamma_1-(\bla x1+{k-1})$. 
 \item This happens for instance if the tuple $(\bla \gamma1,k,\bla \delta1,k)$ is tensor with all coordinates infinite (see \cite{L} for the definition of, and various results on, tensor tuples).
 \item Let $k=2$, and take $\delta_1> \mathbb N$ such that the pair $(\delta_1, \gamma)$ is tensor. Consider the interval $[\gamma\cdot (\gamma-\delta_1), \gamma\cdot (\gamma-\delta_1+1)]$. Take an arbitrary $\delta_2$ in this interval but not in $\mathbb N(\set{\delta_1, \gamma})$.  The type of $\gamma_1^{\delta_1}\gamma_2^{\delta_2}$ does not lie on an antichain, as we may take any $\delta_2'\equiv_{\delta_1,\gamma} \delta_2$ and consider $\gamma_1^{\delta_1}\gamma_2^{\delta_2'}$. On the other hand, whenever $(\delta_1'', \delta_2'')\equiv_{\gamma}(\delta_1,\delta_2)$ and $\delta_1''> \delta_1$, by \Cref{rem:eqinfdist} we also have $\delta_1''> \delta_1+1$, and it follows that $\delta_2''<\delta_2$.
 \item Set $\bla \delta{\ell+1}=k=1$ and choose $\bla \delta1,\ell$ in such a way that $\gamma_1^{\delta_1}\cdot\ldots\cdot \gamma_\ell^{\delta_\ell}$ is of kind \ref{point:nrac}, \ref{point:fulldim}, or \ref{point:ladder}.
 \end{enumerate}
\end{eg}

\begin{rem}
  \begin{enumerate}  [label=(\alph*),leftmargin=*]
  \item   It follows by inspection of the cases above that, if $\delta,\epsilon\in\star \mathbb N^k$ are at finite distance, then the classes of $\tp(\prod_{i\le k} \gamma_i^{\delta_i}/\mathbb N)$ and of  $\tp(\prod_{i\le k} \gamma_i^{\epsilon_i}/\mathbb N)$ are of the same kind.

  \item Taking supremums may result in a change of kind.   For example, let $u_n\coloneqq\tp(\gamma_1^{1}, \gamma_2^n)$. For all $n$, the class of $u_n$ is of kind~\ref{point:lowerdim}, but the supremum of these classes is of kind~\ref{point:mixed}.
  \end{enumerate}
\end{rem}

\section{Further directions}\label{sec:croq}
In \Cref{thm:approx1char} we gave a fairly explicit description of $S_1(A)/\mathord\approx$. The problem of finding a nice characterisation in the higher dimensional case still remains.
    \begin{problem}\label{prob:descska}
For $A$ a subset of a linearly ordered, definably complete structure, describe the posets $S_k(A)/\mathord\approx$.
    \end{problem}
    As for divisibility, even assuming a satisfactory answer to \Cref{prob:descska}, it remains open to give a precise description of how the various $\mathcal E_q$ fit together.

    Furthermore, the case of ultrafilters divisible by infinitely many primes is still largely unexplored.  Still, some results and techniques from this paper apply also to the case of infinitely many prime divisors; for instance, one can easily adapt the argument from \Cref{rem:speccase} and obtain that an ultrafilter lies on an antichain if and only if it has a singleton $=_\sim$-equivalence class.

    A tool introduced to study this problem are so-called patterns, developed in \cite{So8} and \cite{So10}. Patterns reflect the quantity of each element of $\mathcal E_p$ (for various $p\in \overline{\mathbb P}$) dividing the given ultrafilter. They do not determine completely the place of an ultrafilter in the divisibility preorder, nor are determined by it, but several divisibility-related properties have equivalent versions in terms of patterns.

     Observe that the study of divisibility has an equivalent formulation.

    \begin{rem}
      There is an isomorphism between $(\mathbb N, \mid)$ and the poset of finite multisets of natural numbers with inclusion.
    \end{rem}

\begin{problem}
  Describe the order on ultrafilters on finite multisets of natural numbers induced by inclusion.
\end{problem}

A first natural subcase is that of finite sets, that is, the study of the relation $\precapprox$ induced by the partial order $\subseteq$ on  $\mathcal P_{<\omega}(\mathbb N)$, where the latter is equipped with the full language, namely  with a symbol for every possible relation. This corresponds to divisibility amongst squarefree natural numbers. Even in this special case, one cannot argue anymore by fixing primes $\gamma_i$ as in \Cref{sec:div}, as visible in the following examples.

\begin{eg}
Let $\pi$ be the increasing enumeration of the primes and let $\alpha> \mathbb N$. By \Cref{pr:antichsing}, the type of $\prod_{i=\alpha}^{2\alpha} \pi(i)$ forms a non-realised singleton $=_{\sim}$-class, since it contains the antichain $\{\prod_{i=n}^{2n} \pi(i): n\in\mathbb{N}\}$.
\end{eg}

\begin{eg}With the same notation as in the previous example, all ultrafilters generated by $\prod_{i=1}^{\alpha} \pi(i)$, with  $\alpha>\mathbb N$, are in the same $=_{\sim}$-class. In fact, for every $\alpha_1<\alpha_2$, there is $\alpha_3>\alpha_2$ with the same type as $\alpha_1$; it follows that $\prod_{i=1}^{\alpha_1}\pi(i)$ and $\prod_{i=1}^{\alpha_3}\pi(i)$ have the same type, and clearly $\prod_{i=1}^{\alpha_1} \pi(i)\mid\prod_{i=1}^{\alpha_2} \pi(i)\mid\prod_{i=1}^{\alpha_3} \pi(i)$. 
\end{eg}

Let us conclude with a final example, phrased directly in terms of inclusion on hyperfinite sets. It shows that there are intervals with both endpoints not in $\mathbb N$ that do not form singleton classes.

\begin{eg}
 For fixed $\delta$, all intervals of the form $(\delta-\gamma, \delta+\gamma)$ with $(\gamma,\delta)$ tensor are in the same $\approx$-class: to witness the inequality $(\delta-\epsilon, \delta+\epsilon)\precapprox(\delta-\gamma, \delta+\gamma)$ for $\epsilon>\gamma$, just replace $\epsilon$ by some $\epsilon'$ such that the triple $(\epsilon',\gamma,\delta)$ is tensor.
\end{eg}

\newcommand{\noop}[1]{}

\footnotesize

\newcommand{\etalchar}[1]{$^{#1}$}

\end{document}